\newcommand\remove[1]{}
\renewcommand{\le}{\leqslant}
\renewcommand{\ge}{\geqslant}
\renewcommand{\leq}{\leqslant}
\renewcommand{\geq}{\geqslant}
\newcommand{\R}{\mathbb{R}}
\newcommand{\E}{\mathbb{E}}
\newcommand{\U}{\mathscr{U}}
\newcommand{\F}{\mathcal{F}}
\newcommand{\N}{\mathbb{N}}
\newcommand{\Lip}{\mathrm{Lip}}
\newcommand{\MET}{{\tt MET}}
\DeclareMathOperator{\dist}{dist}
\newtheorem{theorem}{Theorem}
\newtheorem{lemma}[theorem]{Lemma}
\newtheorem{proposition}[theorem]{Proposition}
\newtheorem{corollary}[theorem]{Corollary}
\theoremstyle{definition}
\newtheorem{remark}[theorem]{Remark}
\newtheorem{question}[theorem]{Question}
\newcommand{\eqdef}{\stackrel{\mathrm{def}}{=}}
\begin{document}

\title{A note on dichotomies for metric transforms}

\author{Manor Mendel}
\thanks{M.M. was supported by ISF grant 221/07,
BSF grant 2006009, and
a gift from Cisco Research Center}
\address {Mathematics and Computer Science Department\\
The Open University of Israel}
\email{manorme@openu.ac.il}.
\author{Assaf Naor}
\thanks{A.N. was supported
by NSF grants CCF-0635078 and CCF-0832795, BSF grant 2006009, and
the Packard Foundation.}
\address{Courant Institute\\ New York University}
\email{naor@cims.nyu.edu}
\date{}

\begin{abstract}
We show that for every nondecreasing concave function $\omega:[0,\infty)\to [0,\infty)$ with $\omega(0)=0$, either every finite metric space embeds with distortion arbitrarily close to $1$ into a metric space of the form $(X,\omega\circ d)$ for some metric $d$ on $X$, or there exists $\alpha=\alpha(\omega)>0$ and $n_0=n_0(\omega)\in \N$ such that for all $n\ge n_0$, any embedding of $\{0,\ldots,n\}\subseteq \R$ into a metric space of the form $(X,\omega\circ d)$ incurs distortion at least $n^\alpha$.
\end{abstract}

\maketitle

\section{Introduction}

The distortion of a bi-Lipschitz embedding $f:X \to Y$ of metric spaces $(X,d_X)$ and $(Y,d_Y)$ is defined as
$$
\dist(f)\eqdef\left(\sup_{\stackrel{x,y\in X}{x\neq y}}\frac{d_Y(f(x),f(y))}{d_X(x,y)}\right)\cdot\left(\sup_{\stackrel{x,y\in X}{x\neq y}}\frac{d_X(x,y)}{d_Y(f(x),f(y))}\right)=\|f\|_{\text{Lip}} \cdot \|f^{-1}\|_{\text{Lip}}.
$$
If $(X,d_X)$ admits a bi-Lipschitz embedding into $(Y,d_Y)$, then the least distortion of such an embedding is denoted
$$
c_Y(X)\eqdef \inf\left\{\dist(f):\ f:X\to Y\right\}.
$$
If $\F$ is a family of metric spaces then the distortion of $(X,d_X)$ in $\F$ is defined as
$$
c_\F(X)\eqdef \inf\left\{c_Y(X):\ Y\in \F\right\}.
$$

Let $\omega:[0,\infty)\to [0,\infty)$ be a nondecreasing concave function with $\omega(0)=0$. Then $(X,\omega\circ d_X)$ is a metric space for any metric space $(X,d_X)$, known as the $\omega$-metric transform of $(X,d_X)$. When $\omega(t)=t^\theta$ for some $\theta\in (0,1]$, the metric space $(X,d_X^\theta)$ is known as the $\theta$-snowflake of $(X,d_X)$. In what follows we denote the class of all finite metric spaces by $\MET$, and the class of all $\omega$-metric transforms by $$\omega(\MET)\eqdef\{(X,\omega\circ d_X):\ (X,d_X)\in \MET\}.
$$

Let $P_n=\{0,1,\ldots,n\}\subseteq \R$ be the $(n+1)$-point path, equipped with the metric inherited from the real line. The main purpose of this note is to prove the following dichotomy theorem for metric transforms:
\begin{theorem}\label{thm:omega}
For every nondecreasing concave function $\omega:[0,\infty)\to [0,\infty)$ with $\omega(0)=0$, one of the following two dichotomic possibilities must hold true:
\begin{itemize}
\item either for every $X\in \MET$ we have $c_{\omega(\MET)}(X)=1$,
\item or there exists $\alpha=\alpha(\omega)>0$ and $n_0=n_0(\omega)\in \N$ such that for all $n\ge n_0$ we have $c_{\omega(\MET)}(P_n)\ge n^\alpha$.
\end{itemize}
\end{theorem}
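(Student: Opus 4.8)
The plan is to reduce the whole dichotomy to the behaviour of a single function of one real variable. Writing $\psi\eqdef\omega^{-1}$ (a convex increasing function with $\psi(0)=0$, hence superadditive, by concavity of $\omega$), I would introduce
\[
\Theta(n)\eqdef\inf_{c>0}\frac{n\,\omega(c)}{\omega(cn)}.
\]
Since $\omega$ is concave with $\omega(0)=0$ the ratio $c\mapsto\omega(c)/c$ is nonincreasing, whence $\frac{n\omega(c)}{\omega(cn)}=\frac{\omega(c)/c}{\omega(cn)/(cn)}\ge 1$, so $\Theta(n)\ge 1$; and $\omega$ nondecreasing gives $\Theta(n)\le n$. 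The claim driving everything is that $\Theta(n)$ essentially \emph{is} the distortion $c_{\omega(\MET)}(P_n)$, and that the stated dichotomy is exactly the dichotomy between $\Theta$ growing polynomially in $n$ or not.

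The first and most substantial step is the lower bound $c_{\omega(\MET)}(P_n)\ge\Theta(n)$. Given any embedding of $P_n$ into some $(X,\omega\circ d)$ of distortion $D$, I may take $X$ to be the image and, after rescaling $d$, assume $\lambda|i-j|\le\omega(d(i,j))\le D\lambda|i-j|$ for some $\lambda>0$; equivalently $\psi(\lambda|i-j|)\le d(i,j)\le\psi(D\lambda|i-j|)$. The crucial point is to locate the binding constraint among all triangle inequalities: because $\psi$ is convex (so $\psi(\lambda m)/m$ is increasing in $m$ and finer subdivisions give smaller sums), the extremal constraint is the unit-step subdivision of the longest pair, giving $\psi(\lambda n)\le d(0,n)\le\sum_{i=0}^{n-1}d(i,i+1)\le n\,\psi(D\lambda)$. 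Setting $a\eqdef\psi(D\lambda)$, so that $\omega(a)=D\lambda$, and applying the increasing function $\omega$ to $\psi(\lambda n)\le na$ turns this into $n\omega(a)/D\le\omega(na)$, i.e. $D\ge\frac{n\omega(a)}{\omega(na)}\ge\Theta(n)$.

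Next I would record the soft structural facts. Directly from the definition, $\Theta$ is supermultiplicative, $\Theta(mn)\ge\Theta(m)\Theta(n)$, since the infimum of a product of positive factors dominates the product of the infima. Moreover $n\mapsto c_{\omega(\MET)}(P_n)$ is nondecreasing, as $P_m$ embeds isometrically into $P_n$ for $m\le n$. By Fekete's lemma the exponent
\[
\alpha_0\eqdef\lim_{n\to\infty}\frac{\log\Theta(n)}{\log n}=\sup_{n\ge 2}\frac{\log\Theta(n)}{\log n}
\]
exists in $[0,1]$. If $\alpha_0>0$, pick $n_1$ with $\Theta(n_1)=n_1^{\beta}$ for some $\beta>0$; supermultiplicativity gives $\Theta(n_1^{k})\ge n_1^{\beta k}$, and combined with monotonicity and the lower bound of the previous step one gets $c_{\omega(\MET)}(P_n)\ge\Theta(n)\ge n^{\alpha}$ for all large $n$, with $\alpha=\beta/2$, which is the second alternative of the theorem.

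Finally, the case $\alpha_0=0$ yields the first alternative. Since every term $\frac{\log\Theta(n)}{\log n}$ is nonnegative and their supremum vanishes, one must have $\Theta(n)=1$ for every $n$ (the same argument applies to real dilations $\Phi\ge 1$). Unwinding the definition, $\Theta(\Phi)=1$ means that for every $\delta>0$ there is a window $[a,a\Phi]$ on which $\omega(t)/t$ varies by a factor at most $1+\delta$, i.e. $\omega$ is multiplicatively within $1+\delta$ of a linear function there. Given any finite $(Z,\rho)$ of aspect ratio at most $\Phi$, I rescale $\rho$ so that all its distances lie in $[a,a\Phi]$ and take $d$ to be this scaled copy of $\rho$, which is automatically a genuine metric; then $\omega\circ d$ is within a factor $1+\delta$ of a scalar multiple of $\rho$, so $c_{\omega(\MET)}(Z)\le 1+\delta$, and letting $\delta\to 0$ gives $c_{\omega(\MET)}(Z)=1$. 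I expect the main obstacle to be Step~2: correctly isolating the quantity $\Theta$ and, within the lower bound, arguing that convexity of $\omega^{-1}$ makes the finest subdivision of the longest pair the single binding triangle inequality; once that identification is made, the remaining dichotomy is a short Fekete-type argument together with the elementary near-linear-window embedding above.
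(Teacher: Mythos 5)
Your route is genuinely different from the paper's and, after one repair, it works. The paper never isolates a numerical modulus of $\omega$; instead it (i) proves a dichotomy for embeddings of subsets of $\R$ into an \emph{arbitrary} class $\F$ (Proposition~\ref{prop:line-dichotomy}), with the polynomial exponent coming from the path-boosting/sub-multiplicativity Lemma~\ref{prop:path-boosting} quoted from earlier work, and (ii) reduces $n$-point spaces to line metrics (Lemma~\ref{lem:reduce to line}) by embedding the distance set $L=\{d(x,y)\}_{x,y\in X}\subseteq \R$ and pulling the maximum in $d(x,y)=\max_{z}|d(x,z)-d(y,z)|$ through $\omega$ by monotonicity. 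You instead read off the whole dichotomy from the explicit quantity $\Theta(n)=\inf_{c>0}n\omega(c)/\omega(cn)$: supermultiplicativity, via the factorization $\frac{mn\,\omega(c)}{\omega(cmn)}=\frac{m\omega(c)}{\omega(cm)}\cdot\frac{n\omega(cm)}{\omega(cmn)}$, together with the triangle-inequality lower bound handles the case $\Theta\not\equiv 1$, while the near-linear-window argument (which only uses that $t\mapsto \omega(t)/t$ is nonincreasing) handles $\Theta\equiv 1$. Your approach is self-contained, produces an explicit exponent in terms of $\omega$, and actually \emph{characterizes} which horn of the dichotomy holds; the paper's approach buys the more general line dichotomy, valid for any class $\F$. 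Incidentally, you do not need Fekete's lemma: the cases ``$\Theta(n)=1$ for every $n$'' and ``$\Theta(n_1)>1$ for some $n_1$'' already exhaust everything, and in the second case supermultiplicativity along powers of $n_1$, the isometric inclusions $P_{n_1^k}\subseteq P_n$, and your Step-1 bound give $c_{\omega(\MET)}(P_n)\ge n^{\alpha}$ for $n\ge n_1$ with $\alpha=\log \Theta(n_1)/(2\log n_1)$.

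The one step that fails as written is the reliance on $\psi=\omega^{-1}$. The theorem assumes only that $\omega$ is \emph{nondecreasing}, so $\omega$ may be bounded and flat (e.g.\ $\omega(t)=\min\{t,1\}$) or even discontinuous at $0$ (e.g.\ $\omega(0)=0$, $\omega(t)=1+t$ for $t>0$, which is concave and nondecreasing); in such cases $\omega^{-1}$ does not exist, and even with a generalized inverse the inequality $d(i,j)\le \psi(D\lambda|i-j|)$ is simply false where $\omega$ is flat: the hypothesis $\omega(d(i,i+1))\le D\lambda$ puts no upper bound whatsoever on $d(i,i+1)$ once $D\lambda\ge \sup\omega$. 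Fortunately the repair needs no inversion at all and stays inside your scheme: set $a=\max_{0\le i<n}d(i,i+1)$, which is finite and positive. Monotonicity of $\omega$ gives $\omega(a)=\max_i \omega(d(i,i+1))\le D\lambda$, and the triangle inequality plus monotonicity give $\lambda n\le \omega(d(0,n))\le \omega(na)$; dividing, $D\ge n\omega(a)/\omega(na)\ge \Theta(n)$, where all divisions are legitimate because $\omega(a)\ge \omega(d(0,1))\ge\lambda>0$. This is exactly the conclusion of your Step 1, and with this patch the remainder of your argument goes through verbatim for every nondecreasing concave $\omega$ with $\omega(0)=0$.
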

Theorem~\ref{thm:omega} is sharp due to the following fact:
\begin{proposition} \label{prop:transform-dichotomy-tight}
	For every $\alpha\in (0,1]$ there exists a nondecreasing and concave function $\omega:[0,\infty)\to [0,\infty)$ with $\omega(0)=0$
such that $c_{\omega(\MET)}(X)\le(|X|-1)^\alpha$ for every finite metric space $X$, and
$c_{\omega(\MET)}(P_n)= n^\alpha$ for every $n\in \mathbb N$.
\end{proposition}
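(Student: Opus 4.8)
The plan is to exhibit $\omega$ explicitly as a snowflake function. For $\alpha\in(0,1)$ I set $\theta\eqdef 1-\alpha\in(0,1)$ and take $\omega(t)=t^\theta$, so that $\omega(\MET)$ is precisely the class of $\theta$-snowflakes of finite metric spaces; for the endpoint $\alpha=1$ I take the degenerate transform $\omega(t)=\mathbf 1_{(0,\infty)}(t)$ (the uniform-metric transform), which is nondecreasing and concave with $\omega(0)=0$, and for which $P_n$ collapses to a uniform metric so that the two assertions are immediate ($c_{\omega(\MET)}(P_n)=n$ and $c_{\omega(\MET)}(X)\le|X|-1$). Thus the whole content is the snowflake computation, which I split into a universal upper bound and a matching lower bound for the path; their combination on $X=P_n$ will give exact equality.

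For the upper bound, fix a finite metric space $(X,d_X)$ with $m\eqdef|X|$ points and consider the symmetric function $\rho\eqdef d_X^{1/\theta}$ on $X\times X$, which need not satisfy the triangle inequality since $1/\theta>1$. Let $d$ be the shortest-path metric that $\rho$ induces on $X$, namely $d(x,y)=\min\sum_{k}\rho(z_k,z_{k+1})$ over all chains $x=z_0,\dots,z_\ell=y$; this is a genuine metric with $d\le\rho$, and I claim the identity is an embedding of $X$ into $(X,\omega\circ d)=(X,d^\theta)\in\omega(\MET)$ of distortion at most $(m-1)^\alpha$. The contraction bound $d(x,y)^\theta\le\rho(x,y)^\theta=d_X(x,y)$ is immediate. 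For expansion, since $\rho\ge0$ an optimal chain may be taken simple, hence $\ell\le m-1$; writing $a_k\eqdef d_X(z_k,z_{k+1})$ and using $\sum_k a_k\ge d_X(x,y)$ with the power-mean inequality (valid because $1/\theta\ge1$) gives
\[
d(x,y)=\sum_{k}a_k^{1/\theta}\ge \ell^{\,1-1/\theta}\left(\sum_k a_k\right)^{1/\theta}\ge (m-1)^{1-1/\theta}\,d_X(x,y)^{1/\theta},
\]
where the last step uses $1-1/\theta<0$ and $\ell\le m-1$. Raising to the power $\theta$ yields $d(x,y)^\theta\ge (m-1)^{-\alpha}d_X(x,y)$, so the distortion of the identity map is at most $(m-1)^\alpha$.

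For the lower bound on $c_{\omega(\MET)}(P_n)$, suppose $f\colon P_n\to (X,d^\theta)$ and, after rescaling $d_X$, assume $\|f^{-1}\|_{\mathrm{Lip}}\le1$ and $\|f\|_{\mathrm{Lip}}\le L=\dist(f)$; writing $x_i\eqdef f(i)$ this means $|i-j|\le d(x_i,x_j)^\theta\le L|i-j|$, so $d(x_i,x_{i+1})\le L^{1/\theta}$ while $d(x_0,x_n)\ge n^{1/\theta}$. The triangle inequality in $(X,d)$ then forces
\[
n^{1/\theta}\le d(x_0,x_n)\le \sum_{i=0}^{n-1} d(x_i,x_{i+1})\le n\,L^{1/\theta},
\]
whence $L\ge n^{1-\theta}=n^\alpha$, i.e. $c_{\omega(\MET)}(P_n)\ge n^\alpha$. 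Applying the upper bound to $X=P_n$, where $|P_n|-1=n$, gives the reverse inequality $c_{\omega(\MET)}(P_n)\le n^\alpha$, and hence the exact equality claimed.

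The only substantive step is the expansion estimate, and the reason the constants come out \emph{sharp} (exactly $(|X|-1)^\alpha$, not merely up to a factor) is the conjunction of two tight ingredients: a shortest $\rho$-path through $m$ points uses at most $m-1$ edges, and the power-mean inequality is saturated by chains with equal edge lengths. I therefore expect the delicate points to be purely bookkeeping—justifying that an optimal chain can be taken simple, and treating the degenerate endpoint $\alpha=1$ by hand via the uniform-metric transform—rather than any deeper obstruction, since the same telescoping/triangle-inequality consideration forces both the extremal path realizing the lower bound and the shortest-path metric realizing the upper bound.
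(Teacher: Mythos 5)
For $\alpha\in(0,1)$ your argument is correct, and it takes a genuinely different route from the paper's. The paper proves this case (Corollary~\ref{cor:poly-growth}) in two steps: a reduction lemma (Lemma~\ref{lem:reduce to line snowflake}) showing that to embed an $n$-point space $(X,d)$ into a $\theta$-snowflake it suffices to embed each distance set $L_z=\{d(x,z)\}_{x\in X}\subseteq\R$, the embeddings being recombined via $\delta(x,y)=\max_z \rho_z(d(x,z),d(y,z))/\lambda_z^{1/\theta}$; and an explicit embedding of line subsets into the snowflaked line by cumulative sums $z_i=\sum_{k<i}(y_{k+1}-y_k)^{1/(1-\theta)}$ (inside the proof of Proposition~\ref{prop:line-embed}). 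Your construction---replace $d_X$ by $\rho=d_X^{1/\theta}$, pass to the induced shortest-chain metric $d$, and control the loss by taking simple chains and applying the power-mean inequality---does both steps at once: on a subset of the line, superadditivity of $t\mapsto t^{1/\theta}$ makes the optimal chain the finest one, so your chain metric \emph{is} the paper's cumulative-sum metric, and your argument is its natural globalization to arbitrary finite metric spaces, bypassing Lemma~\ref{lem:reduce to line snowflake} entirely. The path lower bound is the same telescoping computation in both. What the paper's longer route buys is Lemma~\ref{lem:reduce to line snowflake} itself, an equality of independent interest that also feeds Remark~\ref{rem:diff}; what yours buys is a self-contained, more elementary proof with the same sharp constant $(|X|-1)^{1-\theta}$.

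The endpoint $\alpha=1$, however, is a genuine gap. For $\omega=\mathbf 1_{(0,\infty)}$ the class $\omega(\MET)$ consists precisely of equilateral spaces, so $c_{\omega(\MET)}(X)$ equals the aspect ratio $\diam(X)/\min_{x\neq y}d_X(x,y)$ for every finite $X$; this is not bounded by $|X|-1$ (take $X=\{0,1,M\}\subseteq\R$: then $|X|-1=2$ but the distortion needed is $M$). So the assertion you call immediate, $c_{\omega(\MET)}(X)\le|X|-1$, is false. Moreover the gap is not repairable by a better choice of $\omega$: your own upper-bound idea, applied to the scaled path metric $\rho(i,j)=t|i-j|$ on $P_n$, shows that $c_{\omega(\MET)}(P_n)\le n\,\omega(t)/\omega(nt)$ for every $t>0$, since $\omega(kt)/k$ is maximized at $k=1$ and minimized at $k=n$ by concavity. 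Hence $c_{\omega(\MET)}(P_n)=n$ for all $n$ forces $\omega(nt)=\omega(t)$ for all $n,t$, i.e.\ $\omega$ constant on $(0,\infty)$---which is exactly the equilateral transform that fails the other requirement. So no admissible $\omega$ satisfies both conditions at $\alpha=1$. You are in good company: the paper's own proof (Corollary~\ref{cor:poly-growth}) covers only $\theta\in(0,1)$, i.e.\ $\alpha\in(0,1)$, and its exponent-one construction in Proposition~\ref{prop:line-embed} uses ultrametrics, which do not form a class of the form $\omega(\MET)$; the $\alpha=1$ case of Proposition~\ref{prop:transform-dichotomy-tight} as literally stated is not proved there and, by the argument above, cannot be.
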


\subsection*{The metric cotype dichotomy problem}

\newcommand{\G}{\mathcal{G}}
 \newcommand{\D}{\mathscr D}
 \renewcommand{\E}{\mathcal E}

Our motivation for proving Theorem~\ref{thm:omega} is one of the main questions left open in our investigation of metric cotype~\cite{MN-cotype-full}. To explain it, we recall the following theorem\footnote{This phenomenon was first conjectured to hold true by  Arora, Lov\'asz, Newman, Rabani, Rabinovich and Vempala in~\cite{ALNRRV}.} from~\cite[Thm.~1.6]{MN-cotype-full}. Given two classes of metric spaces $\E,\F$ and an integer $n\in \N$, denote
$$
\D_n(\E\hookrightarrow \F)\eqdef \sup_{\substack{X\in \E\\|X|\le n}} c_\F(X).
$$
When $\E=\MET$ we write
$$
\D_n(\F)\eqdef \D_n(\MET\hookrightarrow \F)= \sup_{\substack{X\in \MET\\|X|\le n}} c_\F(X).
$$

\begin{theorem}[Metric cotype dichotomy~\cite{MN-cotype-full}]\label{thm:MN dich}
For any class of metric spaces $\F$, one of the following two dichotomic
possibilities must hold true:
\begin{itemize}
\item either $\D_n(\F)=1$ for all $n\in \N$,
\item or there exists $\alpha=\alpha(\F)>0$ and $n_0=n_0(\F)\in \N$ such that for all $n\ge n_0$ we have $\D_n(\F)\ge (\log n)^\alpha$.
\end{itemize}
\end{theorem}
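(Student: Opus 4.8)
The plan is to prove the dichotomy according to a single bi-Lipschitz invariant of the class $\F$: whether or not $\F$ satisfies a nontrivial \emph{metric cotype} inequality. Fix a candidate exponent $q\in[2,\infty)$, a constant $\Gamma$, and an admissible scale $m=m(n)$, and say that $\F$ has metric cotype $q$ with constant $\Gamma$ if every $Y\in\F$ satisfies, for all $n$ and every $f\colon \Z_m^n\to Y$,
$$
\sum_{j=1}^n \mathbb{E}_x\!\left[d_Y\!\left(f\!\left(x+\tfrac m2 e_j\right),f(x)\right)^q\right]\;\le\;\Gamma^q m^q\,\mathbb{E}_{x,\e}\!\left[d_Y\!\left(f(x+\e),f(x)\right)^q\right],
$$
where $x$ is uniform on the discrete torus $\Z_m^n$ and $\e$ is a uniformly random coordinate direction. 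The entire argument rests on the claim that this family of inequalities is exactly the obstruction to embedding the tori $\Z_m^n$ into $\F$, so that the two alternatives of the theorem correspond to the failure, respectively validity, of such an inequality for some finite pair $(q,\Gamma)$.

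\textbf{When no nontrivial inequality holds.} Here, for every $n$ (and every prospective scale) there is a witness $Y_n\in\F$ and a map $f_n$ for which the diagonal sum dominates the edge sum --- an ``almost flat'' torus inside $\F$. I would feed the normalized witnesses into an ultraproduct (or an Arzel\`a--Ascoli limit of normalized finite configurations) to manufacture a single limit space that contains near-isometric copies of arbitrarily fine grids $\{0,\dots,M\}^k$ in the $\ell_\infty$ metric. Since every finite metric space embeds isometrically into $\ell_\infty^k$ by the Fr\'echet embedding, and $\ell_\infty^k$ is approximated with distortion arbitrarily close to $1$ by a sufficiently fine such grid, this yields $c_{\F}(X)=1$ for every $X\in\MET$, i.e. $\D_n(\F)=1$ for all $n$ --- the first alternative.

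\textbf{When a nontrivial inequality holds.} Suppose $\F$ has metric cotype $q$ with constant $\Gamma$ at the \emph{sharp} scale $m\asymp n^{1/q}$. Testing the inequality against an arbitrary embedding $f$ of the torus $\Z_m^n$ of distortion $D$ (normalized so that $d_Y(f\cdot,f\cdot)\le d_{\Z_m^n}(\cdot,\cdot)$ and $\ge D^{-1}d_{\Z_m^n}(\cdot,\cdot)$), the left-hand diagonal sum is at least $\asymp n(m/D)^q$ while the right-hand edge sum is at most $\asymp \Gamma^q m^q$; hence $n\lesssim (\Gamma D)^q$, i.e. $D\gtrsim n^{1/q}/\Gamma$. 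Writing this in terms of the cardinality $N=|\Z_m^n|=m^n$, one has $\log N=n\log m\asymp n\log n$, so $n\gtrsim \log N/\log\log N$ and therefore $c_{\F}(\Z_m^n)\ge D\gtrsim (\log N)^{\alpha}$ for any fixed $\alpha<1/q$ once $N$ is large. As the tori are genuine finite metric spaces, this gives $\D_N(\F)\gtrsim(\log N)^\alpha$ for all large $N$ --- the second alternative.

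The crux, and the step I expect to be hardest, is the quantitative heart of the second case: proving the metric cotype inequality with the sharp scale $m\asymp n^{1/q}$. It is precisely this sharp scaling that makes $\log N=n\log m$ comparable to $n$ and thus upgrades a bound polynomial in the \emph{dimension} $n$ into a genuine power of $\log N$ (rather than a merely doubly-logarithmic estimate). I expect this to require a harmonic-analytic smoothing argument on the torus $\Z_m^n$ in the spirit of Pisier's inequality, carefully tracking the dependence of the scale on $n$. The subtler point in the first case is extracting one universal limit object uniformly across all scales simultaneously, rather than scale by scale, which is what the ultraproduct is designed to accomplish.
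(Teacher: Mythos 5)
The first thing to note is that the paper you were given does not prove Theorem~\ref{thm:MN dich} at all: it quotes it from~\cite{MN-cotype-full}, so your proposal must be measured against the proof given there. Your overall frame does match that proof's strategy --- a metric cotype inequality on the discrete tori $\mathbb{Z}_m^n$ as the dividing invariant, the Fr\'echet embedding plus fine $\ell_\infty$ grids for the first alternative, and, for the second, a distortion lower bound polynomial in the dimension $n$ converted into a power of $\log N$ via $\log N = n\log m$. But your sketch has genuine gaps, two of them fatal as written.

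First, your form of the inequality, with $\varepsilon$ a uniformly random \emph{coordinate direction}, is wrong in a way that matters: in~\cite{MN-cotype-full}, $\varepsilon$ is uniform over $\{-1,0,1\}^n$. With your normalization even the real line violates the inequality once $n$ is large: taking $f(x)=\min(x_1,m-x_1)$, the left-hand side is $\asymp_q m^q$ while your right-hand side is $\asymp \Gamma^q m^q/n$, since only the directions $\pm e_1$ move $f$; so the inequality fails for $n\gtrsim_q \Gamma^q$. Consequently any class containing long paths --- e.g.\ $\F=$ the finite subsets of $\mathbb{R}$ --- has ``trivial cotype'' in your sense and is forced into your first alternative, whose conclusion $\D_n(\F)=1$ is false for that class (an equilateral triangle embeds into a subset of the line only with distortion $2$). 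So the equivalence on which your entire plan rests fails for your version of the inequality.

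Second, even with the correct definition, your first case assumes precisely the hard direction. A violating map is merely one whose diagonal sums dominate its edge sums; the sawtooth example shows such a map can collapse all but one coordinate and look nothing like an embedding. Passing from ``violating maps exist at every scale'' to ``near-isometric copies of $\ell_\infty$ grids sit inside members of $\F$'' is the metric analogue of the Maurey--Pisier theorem, and it is the central, longest argument of~\cite{MN-cotype-full}; the ultraproduct only supplies compactness across scales, which is the easy part. Third, your scale analysis misidentifies the quantitative heart. The sharp scale $m\asymp n^{1/q}$ is \emph{not} needed: any explicit bound, say $m\le n^{O(1)}$ or even $m\le \exp(n^{O(1)})$, still turns $D\gtrsim n^{1/q}/\Gamma$ into $D\ge (\log N)^{\alpha}$ for some $\alpha>0$. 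What \emph{is} needed --- since the definition of metric cotype only posits the existence of some admissible scale $m(n)$ --- is an explicit upper bound on that scale, which the argument of~\cite{MN-cotype-full} supplies and your sketch never addresses; relatedly, your two cases are not exhaustive, since a class could satisfy the inequality at some huge scale but not at the sharp one, and then neither of your arguments applies. Finally, the route you propose to the sharp scale, ``Pisier-style harmonic-analytic smoothing,'' cannot work here: Pisier's inequality requires the linear structure of a Banach space and cannot even be formulated for maps into a general metric space, and even for Banach spaces the sharp scale was established in~\cite{MN-cotype-full} only under $K$-convexity, with the general case a well-known open problem.
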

We call Theorem~\ref{thm:MN dich} a ``metric cotype dichotomy" since the parameter $\alpha=\alpha(\F)$ is related in~\cite{MN-cotype-full} to a numerical invariant of the class $\F$ called the {\em metric cotype of $\F$}; we refer to~\cite{MN-cotype-full} for more details since we will not use the notion of metric cotype here.

A consequence of Theorem~\ref{thm:MN dich} is that if we were told that $\D_n(\F)=(\log n)^{o(1)}$ then we would immediately deduce that actually $\D_n(\F)=1$ for all $n\in \N$. The theory of metric dichotomies studies such dichotomic behavior (if it exists) of the rate of growth of $\{\D_n(\E\hookrightarrow \F)\}_{n=1}^\infty$. For example, we have the following classical result of Bourgain, Milman and Wolfson~\cite{BMW}, corresponding to the case when $\E$ consists of all Hamming hypercubes, and $\F$ consists of a single metric space.
\begin{theorem}[Bourgain-Milman-Wolfson cube dichotomy~\cite{BMW}]\label{thm:BMW dich}
For any metric space $(X,d_X)$ one of the following two dichotomic
possibilities must hold true:
\begin{itemize}
\item either for all $n\in \N$ we have $c_X\left(\{0,1\}^n,\|\cdot\|_1\right)=1$,
\item or there exists $\alpha=\alpha(X)$ and $c=c(X)>0$ such that for all $n\in \N$ we have $c_X\left(\{0,1\}^n,\|\cdot\|_1\right)\ge cn^\alpha$.
\end{itemize}
\end{theorem}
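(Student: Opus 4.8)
The plan is to route the whole dichotomy through a single metric invariant that simultaneously controls the distortion of cubes and obeys a self-improving inequality. For a metric space $(X,d_X)$ and $n\in\N$, let $K_n(X)$ be the smallest constant $K$ such that every $f:\{-1,1\}^n\to X$ satisfies
$$
\frac{1}{2^n}\sum_{\sigma\in\{-1,1\}^n} d_X(f(\sigma),f(-\sigma))^2 \;\le\; K\sum_{j=1}^n \frac{1}{2^n}\sum_{\sigma\in\{-1,1\}^n} d_X\big(f(\sigma),f(\sigma^{(j)})\big)^2,
$$
where $\sigma^{(j)}$ denotes $\sigma$ with its $j$-th coordinate flipped; this is the metric (Enflo--Bourgain--Milman--Wolfson) type-$2$ inequality. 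Flipping the $n$ coordinates one at a time along a geodesic of the cube and applying Cauchy--Schwarz shows $K_n(X)\le n$ for every $X$, with equality for $X=(\R^n,\|\cdot\|_1)$. I would then record the elementary half: if $f$ embeds $(\{0,1\}^n,\|\cdot\|_1)$ with distortion $D$, normalized so the Hamming metric is dominated by $d_X\circ f$ and $d_X\circ f\le D\cdot(\text{Hamming})$, then the antipodal energy on the left is $\ge n^2$ while the edge energy on the right is $\le D^2 n$, so the definition of $K_n(X)$ forces $n^2\le K_n(X)D^2n$, i.e.
$$
c_X\left(\{0,1\}^n,\|\cdot\|_1\right)\;\ge\;\sqrt{\frac{n}{K_n(X)}}.
$$

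In particular, if $K_n(X)\le C\,n^{2/p-1}$ for some $p>1$ and all $n$ (that is, $X$ has metric type $p$), then $c_X(\{0,1\}^n,\|\cdot\|_1)\ge C^{-1/2}n^{1-1/p}$ for all $n$, which is exactly the second alternative with $\alpha=1-1/p$ and a uniform constant $c=C^{-1/2}$. Thus the theorem reduces to a gap phenomenon for the sequence $K_n(X)$: I would show that either $K_n(X)=n$ for every $n$, or $K_{n_0}(X)\le(1-\eta)n_0$ for some scale $n_0$ and some $\eta>0$, and that the latter self-improves to metric type $p>1$. The extremal case is handled by a rigidity statement: near-equality in the Cauchy--Schwarz and triangle estimates that give $K_n(X)\le n$ forces all cube edges under $f$ to have asymptotically equal length and all flip-paths to be geodesic, so a sequence of near-optimizers is a sequence of embeddings of $\{0,1\}^n$ whose distortion tends to $1$. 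Hence $K_{n_0}(X)=n_0$ implies $c_X(\{0,1\}^{n_0},\|\cdot\|_1)=1$, and if this holds for all $n_0$ we are in the first alternative.

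The main obstacle is the self-improvement step, the metric analogue of Pisier's theorem that $B$-convexity implies nontrivial Rademacher type: a single-scale gap $K_{n_0}(X)\le(1-\eta)n_0$ must be amplified into a polynomial bound $K_m(X)\le C\,m^{1-\e}$ valid for all $m$. The intended mechanism is tensorization — realizing $\{-1,1\}^{kn_0}$ as $k$ blocks of $n_0$ coordinates and applying the improved single-block inequality successively inside the auxiliary metric space of $X$-valued functions equipped with the $L_2$-average metric $\rho(F,G)=(\mathbb{E}_v\, d_X(F(v),G(v))^2)^{1/2}$, which preserves the type constant of $X$. The genuine difficulty, and the crux of the whole argument, is that chaining block-flips through the triangle inequality loses multiplicative constants (the naive split $d(P,R)^2\le 2d(P,Q)^2+2d(Q,R)^2$ already costs a factor $2$) that would destroy the amplification. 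The resolution I would pursue is to pass to an ultrapower of $X$: there finite representability of cubes makes the extremal inequalities exact and the self-similar block decomposition clean, one extracts the sharp subadditivity of $m\mapsto\log K_{2^m}(X)$, and then descends back to $X$. Establishing this loss-free tensorization — rather than the bookkeeping in the elementary distortion bound — is where essentially all of the work lies.
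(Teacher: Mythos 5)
You are comparing against a statement the paper does not prove: Theorem~\ref{thm:BMW dich} is quoted from~\cite{BMW} with no argument given, so the only meaningful comparison is with the original Bourgain--Milman--Wolfson proof. Your architecture is in fact exactly theirs: introduce the metric type constant $K_n(X)$, observe the universal bound $K_n(X)\le n$ (with equality for $\ell_1$), derive $c_X\left(\{0,1\}^n,\|\cdot\|_1\right)\ge \sqrt{n/K_n(X)}$, amplify a one-scale gap $K_{n_0}(X)\le (1-\eta)n_0$ into metric type $p>1$ and hence the polynomial lower bound, and handle the extremal case $K_n(X)=n$ by a rigidity argument producing almost-isometric cubes. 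All of these reductions are correct as stated, and the two alternatives do assemble into the theorem.

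The genuine gap is that you prove neither of the two key steps, and you mislocate the difficulty. The tensorization you call the crux is in fact loss-free and elementary: $K_{mn}\le K_m K_n$ follows by viewing $\{-1,1\}^{mn}$ as an $m$-cube whose ``coordinates'' are $n$-blocks and mapping into the space of $X$-valued functions with the quadratic-mean metric $\rho(F,G)=(\mathbb{E}\, d_X(F,G)^2)^{1/2}$; flipping a single block coordinate is precisely the \emph{antipodal map of an $n$-sub-cube}, so the scale-$m$ inequality (applied pointwise and integrated -- exact by Fubini, since the inequality is between averages of squares) composes with the scale-$n$ inequality with no triangle-inequality chaining anywhere, and your feared factor $2$ from $d(P,R)^2\le 2d(P,Q)^2+2d(Q,R)^2$ never enters. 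Correspondingly, your proposed fix is not an argument: an ultrapower \emph{preserves} the constants $K_n$ (the defining inequality involves finitely many points), it cannot sharpen a lossy estimate, so ``extracting sharp subadditivity of $m\mapsto\log K_{2^m}(X)$ upstairs and descending'' adds nothing beyond the exact submultiplicativity one must prove directly -- which, fortunately, is easy. Meanwhile the step you dispatch in two sentences -- that $K_n(X)=n$ for all $n$ forces $c_X\left(\{0,1\}^n,\|\cdot\|_1\right)=1$ -- is where essentially all the work in~\cite{BMW} lies. Near-equality in Cauchy--Schwarz and the triangle inequality constrains only the flip-paths joining antipodal pairs; it says nothing directly about $d_X(f(\sigma),f(\tau))$ for a general pair of vertices, and a sequence of near-optimizers can degenerate (a constant map makes both sides of the inequality vanish, so one must normalize and rule out collapse). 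A correct execution -- for instance: pass to an ultrapower where a nonconstant exact extremal exists, exploit tightness simultaneously for every ordering of the coordinates to force all edge lengths to be a common constant $c>0$ and all sub-paths of the flip-paths to be geodesic, conclude $d_X(f(\sigma),f(\tau))=c\cdot\rho_{\mathrm{Ham}}(\sigma,\tau)$ for \emph{all} pairs, and then descend to $X$ by finite representability -- needs precisely the bookkeeping you dismissed as routine. In short: right skeleton (it is BMW's), but the proposal as written establishes neither the amplification nor the rigidity, and the one place you invest the ultrapower machinery is the one place it is not needed.
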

 For more information on the theory of metric dichotomies see~\cite[Sec.~1.1]{MN-charlie}, the survey paper~\cite{Men09}, and the references therein. The most fundamental open question in this area concerns the sharpness of the metric cotype dichotomy (Theorem~\ref{thm:MN dich}), or even more generally, the possible rates of growth of $\{\D_n(\F)\}_{n=1}^\infty$. Bourgain's embedding theorem~\cite{Bourgain-embed} (combined with Dvoretzky's theorem~\cite{Dvo60}) says that $\D_n(X)\lesssim \log n$ for every infinite dimensional Banach space $X$. Linial, London and Rabinovich~\cite{LLR} proved that $\D_n(L_1)\asymp \log n$. This was extended by Matou\v{s}ek~\cite{Mat97} to all $L_p$ spaces $p\in [1,\infty)$, by showing that $\D_n(L_p)\asymp 1+\frac{1}{p}\log n$. The work of Ozawa~\cite{Ozawa} and Pisier~\cite{pisier-79,pisier-2008} shows that $\D_n(X)\asymp_X \log n$ for $X$ in a class Banach spaces satisfying certain geometric conditions; this class includes all Banach lattices with finite Rademacher cotype. Lafforgue's work~\cite{Laff08} shows that $\D_n(X)\asymp_X\log n$ whenever $X$ is a $K$-convex Banach space (see also~\cite{lafforgue-2009,MN10}). Additional results along these lines (when $X$ is not necessarily a Banach space) follow from~\cite{NS-2004}.

In light of Theorem~\ref{thm:MN dich} and the above quoted results, we recall the following natural open question from~\cite{MN-cotype-full}.
\begin{question}[Metric cotype dichotomy problem]
Does there exist a class of metrics spaces $\F$ for which $\lim_{n\to \infty}\D_n(\F)=\infty$ yet $\D_n(\F)=o(\log n)$? If so, for which $\alpha\in (0,1)$ there exists a class of metric spaces $\F=\F_\alpha$ such that $\D_n(\F)\asymp_\F (\log n)^\alpha$? What happens if we insist in these questions that $\F$ consists of a single Banach space $X$?
\end{question}

Theorem~\ref{thm:omega} shows that a judicious choice of metric transform $\omega$ cannot show that $\omega(\MET)$ solves the metric cotype dichotomy problem. Furthermore, Proposition~\ref{prop:transform-dichotomy-tight} shows that for every $\theta\in (0,1]$ we can have $\D_n(X)=n^\theta$ for some metric space $X$. Previously it was shown by Matou\v{s}ek~\cite{Mat-lowdim} that when $d$ is even integer, $\D_n\left(\ell_2^d\right)$ behaves
roughly like $n^{2/d}$ (up to polylogarithmic multiplicative factors). It is unknown what are the possible rates of growth of sequences such as $\{\D_n(\F)\}_{n=1}^\infty$; the only currently known restriction, from Theorem~\ref{thm:MN dich}, is that such sequences cannot be larger than $1$ yet behave like $(\log n)^{o(1)}$.


\section{A dichotomy theorem for line metrics}

The following simple result will be used in the proof of Theorem~\ref{thm:omega}. It implies that either every metric space $X$ contains arbitrarily large ``almost geodesics", or any embedding of the path $P_n$ into $X$ incurs very large distortion. Without quantitative estimates on the rate of growth of $c_X(P_n)$, such a result has been previously proved by Matou\v{s}ek in~\cite{Mat-BD} via a metric differentiation argument.

\begin{proposition} \label{prop:line-dichotomy}
For any class of metric spaces $\F$ one of the following two dichotomic
possibilities must hold true:
\begin{itemize}
\item for every $L\subseteq \R$ we have $c_\F(L)=1$,
\item or there exists $\alpha=\alpha(\F)>0$ and $n_0=n_0(\F)\in \N$ such that for all $n\ge n_0$ we have  $c_\F(P_n)\geq
n^\alpha$.
\end{itemize}
\end{proposition}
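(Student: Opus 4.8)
The plan is to reduce the whole dichotomy to a single supermultiplicativity estimate for the sequence $a_n\eqdef c_\F(P_n)$. Two elementary observations come first. Since $P_k$ is isometric to a subset of $P_n$ whenever $k\le n$, and distortion only decreases when passing to a subspace, the sequence $(a_n)$ is nondecreasing. Moreover, for any single space $Y$ one has $c_Y(P_m)\ge c_\F(P_m)$ and $c_Y(P_n)\ge c_\F(P_n)$, so the inequality $\inf_Y\big(c_Y(P_m)c_Y(P_n)\big)\ge c_\F(P_m)c_\F(P_n)$ shows that the class-level bound $a_{mn}\ge a_m a_n$ would follow at once from the single-space supermultiplicativity
\[
c_Y(P_{mn})\ge c_Y(P_m)\cdot c_Y(P_n)\qquad\text{for every metric space }Y.
\]
Establishing this inequality is the heart of the matter, and everything else is soft.

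To prove the single-space estimate I would fix an embedding $f\colon P_{mn}\to Y$ and decompose it along the self-similar structure of the path at scale $m$. The $n+1$ \emph{coarse} points $f(0),f(m),\ldots,f(mn)$ form a rescaled copy of $P_n$, so the coarse embedding already has distortion at least $c_Y(P_n)$; simultaneously, each of the $n$ length-$m$ windows $\{f(km),f(km+1),\ldots,f((k+1)m)\}$ is a rescaled copy of $P_m$ and hence has distortion at least $c_Y(P_m)$. The naive combination of these facts only yields $\dist(f)\ge\max\{c_Y(P_m),c_Y(P_n)\}$, which is useless for a growth statement; the crux is to show that the coarse obstruction and the within-window obstruction \emph{compound multiplicatively}. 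The point I would exploit is that the two obstructions are witnessed by pairs living at well-separated scales (distances $\gtrsim m$ for the coarse witnesses versus $\le m$ for the window witnesses), and that these scales are tied together through the single global bi-Lipschitz condition on $f$ by the triangle inequality; chasing these constraints is what should force the product $c_Y(P_m)c_Y(P_n)$ rather than the maximum. I fully expect this compounding step---turning a ``max'' into a ``product'' by exploiting metric realizability across scales---to be the main obstacle. A useful sanity check that it cannot be avoided is that for the class of ultrametrics one has $c_\F(P_n)=n$ (the top split of the hierarchy collapses all cross distances, whose targets range from $1$ to $n$), so the inequality must hold with equality whenever $n$ is written as a product.

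Granting supermultiplicativity, the dichotomy follows quickly. If the first alternative fails, then some $L\subseteq\R$ has $c_\F(L)>1$; I claim this forces $a_{n_0}>1$ for some $n_0$. Indeed, if $a_n=1$ for all $n$, then every finite rational subset of $\R$, being isometric after rescaling to a subset of some $P_N$, satisfies $c_\F(L)\le a_N=1$, and the general case follows by approximating the coordinates of $L$ by rationals and using that distortion into $\F$ is determined by finite subconfigurations; thus $c_\F(L)=1$ for all $L\subseteq\R$, contradicting the failure of the first alternative. Hence fix $n_0$ with $a_{n_0}>1$ and set $\alpha\eqdef\tfrac12\log a_{n_0}/\log n_0>0$. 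For $n\ge n_0$ choose $k$ with $n_0^{\,k}\le n<n_0^{\,k+1}$; by monotonicity and iterated supermultiplicativity $a_n\ge a_{n_0^{\,k}}\ge a_{n_0}^{\,k}=(n_0^{\,k})^{2\alpha}\ge (n/n_0)^{2\alpha}$, which exceeds $n^\alpha$ once $n\ge n_0^2$. This is exactly the second alternative, completing the proof.
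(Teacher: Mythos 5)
There is a genuine gap, and it sits exactly where you flag it: the supermultiplicativity inequality $c_Y(P_{mn})\ge c_Y(P_m)\,c_Y(P_n)$ is never proved. Everything surrounding it (monotonicity of $a_n=c_\F(P_n)$, the passage from a single space $Y$ to the class $\F$, the rational-approximation step reducing a general finite $L\subseteq\R$ to paths, and the final exponent calculation) is correct but soft; the entire dichotomy lives inside the unproved inequality, for which you offer only a heuristic. That heuristic---windows versus coarse points, tied together by the triangle inequality---is precisely the sub-multiplicativity method the paper uses, but in its \emph{provable} form that method is lossy: Lemma~\ref{prop:path-boosting} (quoted from~\cite{MN-charlie}) says that an embedding of $P_n$ with distortion $D$ yields a rescaled copy of $P_t$ with distortion $1+\delta$ only under the hypothesis $n\ge D^{(4t\log t)/\delta}$, i.e.\ $n$ exponentially large in $t\log t/\delta$. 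Your inequality is a lossless version of this compounding and is strictly stronger than what that lemma gives: it would yield $c_\F(P_n)\gtrsim n^{\log(1+\delta)/(2\log t)}$ where the paper gets only $n^{\delta/(8t\log t)}$, an exponent smaller by a factor of order $t$. Nothing in your sketch addresses why no loss occurs: if $f:P_{mn}\to Y$ has all ratios $d_Y(f(i),f(j))/|i-j|$ in $[\lambda,\Lambda]$, then each window has ratio-spread at least $c_Y(P_m)$ and the coarse net has ratio-spread at least $c_Y(P_n)$, but these two ranges can a priori overlap inside $[\lambda,\Lambda]$, which is why the naive argument only produces the maximum; the triangle-inequality chaining needed to push them apart is exactly the step where constant factors per scale are lost in all known arguments.

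Moreover, your endgame cannot absorb any such loss, so the gap is not cosmetic. If one could only prove $c_Y(P_{mn})\ge c\, c_Y(P_m)c_Y(P_n)$ for a universal $c<1$, the iteration gives $a_{n_0^k}\ge c^{k-1}a_{n_0}^k$, which grows not at all when $1<a_{n_0}\le 1/c$---and this near-isometric regime is precisely the one at issue when the first alternative barely fails. The paper's proof is designed around this difficulty: after the same dilation-and-rounding reduction you perform (producing a fixed $t$ with $c_\F(P_t)>1+\delta/2$), it sets $D=n^{\delta/(8t\log t)}$ and applies Lemma~\ref{prop:path-boosting}: any embedding of $P_n$ into a member of $\F$ with distortion at most $D$ would contain a rescaled $P_t$ embedded with distortion at most $1+\delta/2$, a contradiction, whence $c_\F(P_n)\ge n^{\delta/(8t\log t)}$. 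In other words, the unavoidable losses are pushed into the value of $\alpha$ rather than avoided. To rescue your plan you would either have to actually prove your product inequality---it holds with equality for snowflakes and ultrametrics and can be checked for metric transforms of $\Z$, but I know of neither a proof nor a counterexample for general $Y$, so it cannot be treated as a known fact---or replace it by Lemma~\ref{prop:path-boosting}, at which point your argument becomes the paper's.
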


Proposition~\ref{prop:line-dichotomy} is a simple consequence of the following lemma, taken from~\cite[Prop.~5.1]{MN-charlie}. The proof of this lemma in~\cite{MN-charlie} is a ``baby version" of the sub-multiplicativity method that is commonly used in Banach space theory; see Pisier's characterization of trivial Rademacher type~\cite{Pis73} as an early example of many such arguments. A thorough discussion of the sub-multiplicativity method in the context of metric dichotomies in contained in~\cite{Men09}.

\begin{lemma}\label{prop:path-boosting}
Fix $\delta\in (0,1)$, $D\ge 2$ and $t,n\in \N$ satisfying $n\ge
D^{(4t\log t)/\delta}$. If $(X,d_X)$ is a metric space and $f:P_n\to X$ satisfies $\dist(f)\le D$,
then there exists  $\phi:P_t\to P_n$ which is a rescaled isometry, i.e., $\dist(\phi)=1$, such that
$\dist(f\circ \phi)\le 1+\delta$.
\end{lemma}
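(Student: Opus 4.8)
The plan is to run a sub-multiplicativity argument across a geometric family of scales. First I would normalize $f$ so that its upper Lipschitz constant equals $1$, i.e. $d_X(f(x),f(y))\le |x-y|$ for all $x,y\in P_n$; the hypothesis $\dist(f)\le D$ then reads $d_X(f(x),f(y))\ge |x-y|/D$. Passing to the sub-path $\{0,1,\ldots,t^m\}\subseteq P_n$ with $m\eqdef\lfloor\log_t n\rfloor$, the assumption $n\ge D^{(4t\log t)/\delta}$ guarantees at least $m\ge(4t\log D)/\delta$ scales to work with, which is exactly enough to ensure $D^{1/m}\le 1+\delta/(4t)$.

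The key quantity I would track is the \emph{maximal normalized speed at scale $s$},
\[
M(s)\eqdef \sup_{0\le x\le t^m-s}\frac{d_X(f(x),f(x+s))}{s},
\]
evaluated at the scales $s=t^0,t^1,\ldots,t^m$. Splitting one step of length $ts$ into $t$ consecutive steps of length $s$ and applying the triangle inequality gives $M(ts)\le M(s)$, so $M(t^0)\ge M(t^1)\ge\cdots\ge M(t^m)$. Since $M(t^0)\le 1$ and $M(t^m)\ge 1/D$, the telescoping product $\prod_{j=0}^{m-1} M(t^j)/M(t^{j+1})=M(t^0)/M(t^m)$ is at most $D$, so by the pigeonhole principle there is a scale $s^*=t^{j^*}$ with $M(s^*)/M(ts^*)\le D^{1/m}\le 1+\delta/(4t)$, equivalently $M(ts^*)\ge (1-\delta/(4t))M(s^*)$.

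The step I expect to be the crux is converting this single ``good scale'' into a single good arithmetic progression. I would choose $x^*$ \emph{attaining} the supremum defining $M(ts^*)$, set $\beta\eqdef M(s^*)s^*$, and define $\phi:P_t\to P_n$ by $\phi(i)=x^*+is^*$, which is manifestly a rescaled isometry. Writing $\ell_i\eqdef d_X(f(x^*+is^*),f(x^*+(i+1)s^*))$, the definition of $M(s^*)$ gives $\ell_i\le\beta$ for each $i$, while the near-equality $M(ts^*)\ge(1-\delta/(4t))M(s^*)$ combined with $\sum_{i=0}^{t-1}\ell_i\ge d_X(f(x^*),f(x^*+ts^*))=M(ts^*)\cdot ts^*$ forces $\sum_i\ell_i\ge(1-\delta/(4t))t\beta$. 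A one-line shortfall computation then yields two conclusions at once: every consecutive step satisfies $\ell_i\ge(1-\delta/4)\beta$ (\emph{uniform speed}), and the total defect $\sum_i\ell_i-d_X(f(x^*),f(x^*+ts^*))$ is at most $(\delta/4)\beta$ (\emph{near-additivity}). The delicate point, and the reason I insist on telescoping the positional supremum $M(s)$ rather than the total traversed length, is that a naive length-telescoping controls only additivity and is completely blind to wildly varying step sizes; reading the progression off a near-maximizer at the coarse scale is precisely what pins down uniform speed and additivity on one and the same progression.

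Finally I would combine these two facts to bound $\dist(f\circ\phi)$. For the upper estimate, $d_X(f(\phi(i)),f(\phi(j)))\le\sum_{k=i}^{j-1}\ell_k\le (j-i)\beta$, so the largest normalized pairwise speed is at most $\beta/s^*$. For the lower estimate, applying the triangle inequality to the whole progression gives $d_X(f(\phi(i)),f(\phi(j)))\ge\sum_{k=i}^{j-1}\ell_k-(\text{total defect})\ge (j-i)(1-\delta/4)\beta-(\delta/4)\beta$, so the smallest normalized pairwise speed (worst at $j-i=1$) is at least $(1-\delta/2)\beta/s^*$. Dividing, $\dist(f\circ\phi)\le 1/(1-\delta/2)\le 1+\delta$ for $\delta\in(0,1)$, which is the desired conclusion.
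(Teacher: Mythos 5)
Your scheme --- normalize $\|f\|_{\Lip}=1$, telescope the maximal normalized speed $M(t^j)$ over the scales $j=0,1,\ldots,m$, pigeonhole a scale where the drop is at most $D^{1/m}$, read $\phi$ off a maximizer at the coarse scale, and convert uniform speed plus near-additivity into a distortion bound --- is precisely the sub-multiplicativity argument that the paper attributes to~\cite[Prop.~5.1]{MN-charlie}, and the structural steps are all sound: the monotonicity $M(ts)\le M(s)$, the identity $\sum_i\ell_i\ge d_X(f(x^*),f(x^*+ts^*))=M(ts^*)\,ts^*$, the shortfall computation, and the triangle-inequality estimate $d_X(f(\phi(i)),f(\phi(j)))\ge\sum_{k=i}^{j-1}\ell_k-(\text{total defect})$ are all correct.

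The gap is in the scale-counting step, and it is not pedantic because your endgame $\frac{1}{1-\delta/2}\le 1+\delta$ has zero slack as $\delta\to 1$ (it becomes an equality in the limit), so every constant upstream must hold exactly as claimed. You ignore two losses. First, the number of usable scales is $m=\lfloor \log_t n\rfloor$, and the hypothesis only guarantees $m>(4t\log D)/\delta-1$, not $m\ge (4t\log D)/\delta$. Second, if $\log$ denotes the natural logarithm, then even $m\ge(4t\log D)/\delta$ gives only $D^{1/m}\le e^{\delta/(4t)}$, which is \emph{larger} than $1+\delta/(4t)$: your assertion sits on the wrong side of $e^x\ge 1+x$. (The consequence you actually use, $M(ts^*)\ge(1-\frac{\delta}{4t})M(s^*)$, would still follow via $e^{-x}\ge 1-x$, but only absent the floor loss.) Concretely, under the natural-log reading take $t=2$, $D=2$, $\delta=0.999$, and $n=47\ge D^{(4t\log t)/\delta}\approx 46.9$: then $m=5$, the pigeonhole only yields $M(s^*)/M(ts^*)\le 2^{1/5}\approx 1.149$, so $\varepsilon\eqdef 1-M(ts^*)/M(s^*)$ can be as large as $\approx 0.13$; your accounting then gives $\ell_i\ge(1-t\varepsilon)\beta$, defect $\le t\varepsilon\beta$, minimum normalized speed $(1-2t\varepsilon)\beta/s^*\approx 0.48\,\beta/s^*$, hence only $\dist(f\circ\phi)\le 2.07$, short of the required $1+\delta=1.999$. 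Fortunately the repair is local: never charge the additivity defect to adjacent pairs --- for $j=i+1$ the uniform-speed bound $\ell_i\ge(1-t\varepsilon)\beta$ applies directly, and for $j-i\ge 2$ the defect costs at most $\frac{t\varepsilon}{2}\beta$ per unit step --- so the minimum normalized speed improves to $\left(1-\frac{3}{2}t\varepsilon\right)\beta/s^*$. Since $t\varepsilon\le t\left(1-D^{-1/m}\right)\le \frac{t\ln D}{m}\le\frac{\delta}{4}\cdot\frac{A}{A-1}$ with $A=\frac{4t\ln D}{\delta}\ge 8\ln 2$ whenever $t\ge 2$ and $D\ge2$, one checks that $\frac{1}{1-(3/2)t\varepsilon}\le 1+\delta$ for all $\delta\in(0,1)$ (the case $t=1$ being trivial), which absorbs both losses under either convention for $\log$.
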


\begin{proof}[Proof of~Proposition~\ref{prop:line-dichotomy}]
If the first assertion of Proposition~\ref{prop:line-dichotomy} fails then there exists some
$\delta\in(0,1)$ and a finite $L_0\subseteq \mathbb R$ such that
$c_\F(L_0)> 1+\delta$.
Using dilation and rounding, there exists $t\in \mathbb N$
for which $c_{P_{t}}(L_0)\le 1+\delta/3$, and therefore
$c_\F(P_{t})> 1+\delta/2$. Define $D=n^{\delta/(8t \log t)}$ and assume that $n$ is large enough so that $D\ge 2$.  By
Lemma~\ref{prop:path-boosting} we obtain
\[ c_\F(P_n) \ge n^{{\delta}/{8 t \log t}} . \qedhere \]
\end{proof}


Proposition~\ref{prop:line-dichotomy} is sharp due to the following fact:
\begin{proposition} \label{prop:line-embed}
For every $\theta\in(0,1]$ there exists a metric space $X$ such that
for every finite subset of the real line $L\subseteq \mathbb R$ we have $c_X(L)\le (|L|-1)^{\theta}$, and
 $c_X(P_n)= n^\theta$ for all $n\in \N$.
\end{proposition}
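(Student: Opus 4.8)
The plan is to produce, for each fixed $\theta\in(0,1)$, a single explicit space, namely the $(1-\theta)$-snowflake of the real line,
\[
X=X_\theta\eqdef\big(\R,\,|x-y|^{1-\theta}\big),
\]
and to treat the endpoint $\theta=1$ separately. First I would note that $X_\theta$ is a genuine metric space, since $t\mapsto t^{1-\theta}$ is nondecreasing, concave and vanishes at $0$, so $|x-y|^{1-\theta}$ satisfies the triangle inequality. The point is that $X_\theta$ is one fixed space into which all finite $L\subseteq\R$ will be mapped (by rescaling maps, \emph{not} by the inclusion).

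For the lower bound $c_{X_\theta}(P_n)\ge n^\theta$ I would argue directly and one-dimensionally. Given any $f:P_n\to\R$ of distortion $D$ into $X_\theta$, choose $\lambda>0$ with $\lambda|i-j|\le|f(i)-f(j)|^{1-\theta}\le\lambda D|i-j|$ for all $i,j$. The right inequality on consecutive pairs gives $|f(i)-f(i+1)|\le(\lambda D)^{1/(1-\theta)}$, whence $|f(0)-f(n)|\le n(\lambda D)^{1/(1-\theta)}$ by the triangle inequality in $\R$; the left inequality on the pair $(0,n)$ gives $|f(0)-f(n)|\ge(\lambda n)^{1/(1-\theta)}$. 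Comparing the two and cancelling $\lambda^{1/(1-\theta)}$ yields $n^\theta\le D$. This is the easy step.

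The heart of the matter is the upper bound $c_{X_\theta}(L)\le(|L|-1)^\theta$ for an arbitrary finite $L=\{x_0<\cdots<x_m\}\subseteq\R$, where $m=|L|-1$. Writing $g_k=x_k-x_{k-1}$ for the consecutive gaps and $p\eqdef 1/(1-\theta)>1$, I would take the order-preserving map $f$ whose image has gaps $h_k\eqdef g_k^{\,p}$. Then for a pair $i<j$ the target distance is $\big(\sum_{k=i+1}^{j}h_k\big)^{1-\theta}=\big(\sum_{k=i+1}^{j}g_k^{\,p}\big)^{1/p}=\|(g_{i+1},\dots,g_j)\|_p$, while the source distance is $\sum_{k=i+1}^{j}g_k=\|(g_{i+1},\dots,g_j)\|_1$. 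Thus every ratio $\rho(f(x_i),f(x_j))/(x_j-x_i)$ equals $\|g\|_p/\|g\|_1$ on a block of $N=j-i\le m$ entries. Since $\|g\|_p\le\|g\|_1$ with equality on blocks of length $1$, the expansion is exactly $1$; and Hölder's inequality $\|g\|_1\le N^{1-1/p}\|g\|_p$ shows each ratio is at least $N^{1/p-1}\ge m^{1/p-1}$. Hence $\dist(f)\le m^{1-1/p}=m^\theta$. Specializing to $L=P_n$ (all $g_k=1$, so $f$ is the identity) gives $\dist(f)=n^\theta$, which together with the previous paragraph yields $c_{X_\theta}(P_n)=n^\theta$.

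For $\theta=1$ the snowflake degenerates, so I would instead let $X$ be a universal ultrametric $\U$, into which every finite ultrametric embeds isometrically. The classical fact that each $k$-point metric space embeds into some ultrametric with distortion at most $k-1$ then gives $c_\U(L)\le|L|-1$, while the strong triangle inequality $\rho(u_0,u_n)\le\max_i\rho(u_i,u_{i+1})$ forces $c_\U(P_n)\ge n$ via the telescoping argument above, and the subdominant ultrametric of $P_n$ attains it, so $c_\U(P_n)=n$. I expect the main obstacle to be the upper bound for general $L$: guessing the correct base space $(\R,|\cdot|^{1-\theta})$ together with the correct gap-rescaling $h_k=g_k^{\,p}$, and then recognizing that the distortion is governed precisely by the $\ell_p$-versus-$\ell_1$ comparison, which is exactly what pins the exponent to $\theta$ (rather than to $1-\theta$).
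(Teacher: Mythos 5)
Your proposal is correct and follows essentially the same route as the paper: the same snowflaked line $\left(\R,|x-y|^{1-\theta}\right)$ with the same gap-rescaling embedding $h_k=g_k^{1/(1-\theta)}$ (the paper's computation is exactly your $\ell_p$-versus-$\ell_1$ comparison), the same telescoping lower bound, and the same universal-ultrametric construction for $\theta=1$. The only differences are presentational, e.g.\ you prove the ultrametric lower bound for $P_n$ inline where the paper cites it, and you assert rather than construct the universal ultrametric (the paper builds it as a disjoint union of diameter-one ultrametrics).
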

\begin{proof}
Assume first that $\theta<1$. The space $X$ will be the snowflaked real line $\left(\R,|x-y|^{1-\theta}\right)$.

We first observe that any $(n+1)$-point subset $L\subseteq \mathbb R$ embeds in $X$ with distortion $n^\theta$. Indeed, write $L=\{y_0,\ldots, y_n\}$ where
 $y_0<y_1<\ldots<y_n$, and define $z_0=0$, and for $i\in \{1,\ldots,n\}$,
 $$
 z_i=\sum_{k=0}^{i-1} (y_{k+1}-y_k)^{1/(1-\theta)}.
 $$
 If $n\ge j> i\ge 0$ then,
 $$
 |z_j-z_i|^{1-\theta}=\left(\sum_{k=i}^{j-1}(y_{k+1}-y_k)^{1/(1-\theta)}\right)^{1-\theta}\in \left[y_j-y_i,\frac{y_j-y_i}{(j-i)^\theta}\right].
 $$
thus the embedding of $L$ into $X$ which maps $y_i$ to $z_i$ has distortion at most $n^\theta$.

The fact that $c_X(P_n)\ge n^\theta$ is simple. We briefly recall the standard computation. For a bijection $f:P_n\to X$ we have,
$$
  \frac{n}{\|f^{-1}\|_{\mathrm{Lip}}}
\leq |f(0) -f(n)|^{1-\theta}
\leq \left(\sum_{i=0}^{n-1} |f({i+1})-f(i)|\right)^{1-\theta}\le \left(n\|f\|_{\Lip}^{1/(1-\theta)}\right)^{1-\theta}=n^{1-\theta}\|f\|_{\Lip}.
$$
Therefore $\mathrm{dist}(f)\geq n^{\theta}$.

For $\theta=1$ let $\U$ be the class of all finite ultrametrics of diameter $1$. Let $X$ be the disjoint union of the elements of $\U$, equipped with the following metric: if $x,y\in X$ then let $U_1,U_2\in \U$ be finite ultrametics such that $x\in U_1$ and $y\in U_2$. Define $d_X(x,y)=1$ if $U_1\neq U_2$ and $d_X(x,y)=d_{U}(x,y)$ if $U_1=U_2=U$. Then $(X,d_X)$ is an ultrametric. It is a standard and easy fact (see for example~\cite[Lem.~2.4]{MN04} that any embedding of $P_n$ into an ultrametric incurs distortion at least $n$. Thus $c_X(P_n)\ge n$. Furthermore, it is well known (via a Cantor set-type construction; see for example~\cite[Lem.~3.6]{BLMN05}) that any $(n+1)$-point
metric space  embeds into some ultrametric $U\in \U$ with distortion at most $n$. Since $U$ is isometric to a subset of $X$, we have $c_X(P_n)=n$.
\end{proof}

\section{Proof of Theorem~\ref{thm:omega}}

We will deduce Theorem~\ref{thm:omega} from Proposition~\ref{prop:line-dichotomy} via the following lemma.

\begin{lemma}\label{lem:reduce to line}
Let $\omega:[0,\infty)\to [0,\infty)$ be a nondecreasing concave function  with $\omega(0)=0$. Then for every $n\in \N$ we have
\begin{equation}\label{eq:square}
\D_{n^2}\left(2^\R\hookrightarrow \omega(\MET)\right)\ge \D_n\left(\omega(\MET)\right).
\end{equation}
Here $2^\R$ denotes the class of metric spaces consisting of all nonempty subsets of the real line.
\end{lemma}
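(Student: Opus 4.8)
The plan is to use the variational description of distortion into $\omega(\MET)$. For any finite metric space $(Z,d_Z)$, an embedding of $Z$ into a transform $(Y,\omega\circ d_Y)$ amounts to a choice of metric $\rho$ on the point set of $Z$ together with a scale $s>0$; concretely, $c_{\omega(\MET)}(Z)\le D$ if and only if there are a metric $\rho$ on $Z$ and $s>0$ with $s\,d_Z(x,y)\le \omega(\rho(x,y))\le sD\,d_Z(x,y)$ for all $x,y\in Z$. Since $\omega$ is nondecreasing and concave with $\omega(0)=0$, its (generalized) inverse $\omega^{-1}$ is convex with $\omega^{-1}(0)=0$, hence superadditive; this single fact (equivalently, the subadditivity of $\omega$) is what the line will exploit, because collinear points of $\R$ add up their distances \emph{exactly}. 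I would prove \eqref{eq:square} in contrapositive form: assuming every $L\subseteq\R$ with $|L|\le n^2$ satisfies $c_{\omega(\MET)}(L)\le D$, I will show $c_{\omega(\MET)}(X)\le D$ for every $X\in\MET$ with $|X|\le n$.

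Given such an $X=\{v_1,\dots,v_n\}$, the construction encodes its $\binom n2$ pairwise distances on the line: to each pair $\{i,j\}$ I associate two points of $L$ whose gap equals $d_X(v_i,v_j)$, so that $|L|\le n(n-1)\le n^2$, and I arrange these segments along $\R$ so that their relative positions reflect the combinatorics of $X$. Feeding this $L$ into the hypothesis yields a metric $\rho'$ on $L$ and a scale $s$ with $\omega(\rho'(p,q))\in[s\,|p-q|,\,sD\,|p-q|]$ for all $p,q\in L$; I read off the weights $w_{ij}\eqdef\rho'(\text{endpoints of the }\{i,j\}\text{-segment})$, so that $\omega(w_{ij})\in[s\,d_X(v_i,v_j),\,sD\,d_X(v_i,v_j)]$. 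To obtain an honest metric on $X$, I set $\rho$ to be the shortest-path metric on the complete graph on $\{v_1,\dots,v_n\}$ with edge weights $w_{ij}$; this is automatically a metric, and since $\rho\le w$ pointwise the upper estimate $\omega(\rho(v_i,v_j))\le sD\,d_X(v_i,v_j)$ is immediate. Thus $\rho$ witnesses $c_{\omega(\MET)}(X)\le D$ as soon as the matching lower estimate $\omega(\rho(v_i,v_j))\ge s\,d_X(v_i,v_j)$ is in hand.

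The lower estimate is exactly the assertion that no shortcut through intermediate vertices beats the direct weight by more than the allotted factor: for every path $v_i=u_0,u_1,\dots,u_m=v_j$ in $X$ one needs $\sum_t w_{u_tu_{t+1}}\ge \omega^{-1}(s\,d_X(v_i,v_j))$. This is where the line does the work: if the segments encoding the edges $\{u_t,u_{t+1}\}$ are laid out consecutively and collinearly, then the triangle inequality for $\rho'$ together with its lower bound gives $\sum_t w_{u_tu_{t+1}}\ge \rho'(\text{first endpoint},\text{last endpoint})\ge \omega^{-1}\!\big(s\textstyle\sum_t d_X(u_t,u_{t+1})\big)\ge \omega^{-1}(s\,d_X(v_i,v_j))$, using $\sum_t d_X(u_t,u_{t+1})\ge d_X(v_i,v_j)$ and monotonicity of $\omega^{-1}$. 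Note that the superadditivity of $\omega^{-1}$ is precisely what turns collinear concatenation into a \emph{lower} bound rather than an obstruction. The heart of the matter is therefore a choice of placement of the $\binom n2$ segments on $\R$ that makes this collinearity argument available for every pair and every competing path simultaneously.

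The main obstacle is exactly this combinatorial placement, together with the demand that it cost no constant factor. With only two points per pair there is no room to duplicate a segment into the many collinear configurations that the various triangle inequalities of $X$ appear to require, and naive layouts collapse to the (impossible) demand that $X$ embed isometrically into $\R$. Reconciling the rigidity of the line — which forces exact additivity along collinear triples — with the freedom needed to realize an arbitrary $n$-point metric, within the sharp budget of $n^2$ points and with no loss in distortion, is the crux. I expect the resolution to route every potential shortcut through a single, carefully ordered arrangement of the segments, so that one near-optimal embedding $\rho'$ certifies all the shortest-path lower bounds at once, and to lean on the concavity of $\omega$ (equivalently the convexity of $\omega^{-1}$) to absorb the remaining slack exactly.
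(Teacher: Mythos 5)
Your proposal leaves its central step unproven, and that step is not merely missing but unobtainable along the route you describe. You correctly isolate the crux: the per-pair bounds $s\,d_X(v_i,v_j)\le \omega(w_{ij})\le sD\,d_X(v_i,v_j)$ alone cannot support the shortest-path lower estimate, so you must extract more from $\rho'$ via collinear concatenation, and for that the segments of every competing path must lie consecutively on $\R$ in one fixed placement. No such placement exists, even for three points: if $d_X(a,b)=d_X(b,c)=d_X(a,c)=1$, then handling the three two-edge paths $a,b,c$ and $b,a,c$ and $a,c,b$ forces the three unit segments to chain at shared endpoints, i.e., produces points $\phi(a),\phi(b),\phi(c)\in\R$ with $|\phi(a)-\phi(b)|=|\phi(b)-\phi(c)|=|\phi(a)-\phi(c)|=1$, which is impossible --- exactly the collapse to an isometric embedding into $\R$ that you yourself observe. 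Nor can the collinearity be dispensed with, because without it the shortest-path completion loses a polynomial (not constant) factor: take $\omega(t)=\sqrt{t}$ and $X=P_n$; the weights $w_{ij}=s^2|i-j|^2$ satisfy the per-pair bounds with $D=1$, yet the shortest-path metric has $\rho(0,n)\le\sum_{i=0}^{n-1}w_{i,i+1}=s^2n$, so $\omega(\rho(0,n))\le s\sqrt{n}$, falling short of the required $s\,d_X(0,n)=sn$ by a factor of $\sqrt{n}$ (this is just the fact that $P_n$ does not embed in a snowflaked line, cf.\ Proposition~\ref{prop:line-embed}). So your superadditivity argument cannot beat the minimization over paths, and the placement needed to bypass the problem is self-contradictory; the approach as proposed cannot be completed.

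The paper's proof avoids both fatal ingredients --- per-pair segments and min-over-paths completion. It takes $L=\{d(x,y)\}_{x,y\in X}$, the set of distance values itself (so $|L|\le n^2$), obtains a metric $\rho$ on $L$ and $\lambda>0$ with $\lambda|p-q|\le\omega(\rho(p,q))\le D\lambda|p-q|$ for $p,q\in L$, and defines $\delta(x,y)=\max_{z\in X}\rho(d(x,z),d(y,z))$. Each map $x\mapsto d(x,z)$ is $1$-Lipschitz into $L\subseteq\R$, so each pullback $\rho_z(x,y)=\rho(d(x,z),d(y,z))$ is automatically a semimetric, and a maximum of semimetrics is again a semimetric: the triangle inequality is enforced here by a maximum, which only strengthens the lower bound, rather than by a minimum over paths, which destroys it. Since $\omega$ is nondecreasing, $\omega(\delta(x,y))=\max_z\omega(\rho_z(x,y))$, and the Fr\'echet-type identity $d(x,y)=\max_z|d(x,z)-d(y,z)|$ (the maximum attained at $z\in\{x,y\}$) then gives $\lambda d(x,y)\le\omega(\delta(x,y))\le D\lambda\, d(x,y)$, i.e., distortion at most $D$. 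In effect, each pair $\{x,y\}$ receives not one but $n$ collinear representations on the line, namely the pairs $(d(x,z),d(y,z))$ for all $z\in X$; these representations share points, so the budget of $n^2$ is respected --- precisely the ``duplication'' you concluded there was no room for.
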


\begin{proof}
Fix an $n$-point metric space $(X,d)$ and consider the subset $L$ of the real line defined by $L=\{d(x,y)\}_{x,y\in X}$. Then $|L|\le n^2$. Therefore, if $D> \D_{n^2}\left(2^\R\hookrightarrow \omega(\MET)\right)$ we know that there exists a metric $\rho$ on $L$, and a scaling factor $\lambda>0$, such that for every $x,y,z,w\in X$,
\begin{equation*}\label{eq:lip omega condition}
\lambda|d(x,y)-d(w,z)|\le \omega(\rho(d(x,y),d(w,z)))\le D\lambda |d(x,y)-d(w,z)|.
\end{equation*}
For every $z\in X$ define a semi-metric $\rho_z$ on $X$ by $\rho_z(x,y)=\rho(d(x,z),d(y,z))$. Then $\delta=\max_{z\in X} \rho_z$ is also a semi-metric on $X$.  Using the monotonicity of $\omega$ (and the triangle inequality), for every $x,y\in X$ we have
\begin{multline*}
d(x,y)=\max_{z\in X}|d(x,z)-d(y,z)|\le \frac{1}{\lambda}\max_{z\in X} \omega(\rho(d(x,z),d(y,z)))\\
=\frac{1}{\lambda}\omega\left(\max_{z\in X} \rho(d(x,z),d(y,z))\right)=\frac{1}{\lambda} \omega(\delta(x,y)).
\end{multline*}
Similarly,
$$
d(x,y)\ge \frac{1}{D\lambda} \omega(\delta(x,y)).
$$
Thus the identity mapping between $(X,d)$ and $(X,\omega\circ \delta)\in \omega(\MET)$ has distortion at most $D$. Since $D$ was an arbitrary number bigger than $\D_{n^2}\left(2^\R\hookrightarrow \omega(\MET)\right)$, the proof is complete.
\end{proof}

\begin{proof}[Proof of Theorem~\ref{thm:omega}]
Failure of the first statement of Theorem~\ref{thm:omega} implies the existence of $n_0\in \N$ such that $\D_{n_0}(\omega(\MET))>1$. By Lemma~\ref{lem:reduce to line} it follows that $\D_{n_0^2}\left(2^\R\hookrightarrow \omega(\MET)\right)>1$. Thus the first dichotomic possibility of Proposition~\ref{prop:line-dichotomy} fails, forcing the conclusion that the second dichotomic possibility of Proposition~\ref{prop:line-dichotomy} holds, as required.
\end{proof}

\section{Snowflakes}

The quadratic dependence on $n$ in~\eqref{eq:square} can be removed when $\omega$ corresponds to a snowflake, leading to sharp bounds in this case. The appropriate variant of Lemma~\ref{lem:reduce to line} is as follows.

\begin{lemma}\label{lem:reduce to line snowflake}
For every $\theta\in (0,1)$ and every $n\in \N$ we have,
\begin{equation}\label{eq:improvment}
\D_{n}\left(2^\R\hookrightarrow (\MET)^\theta\right)= \D_n\left((\MET)^\theta\right).
\end{equation}
\end{lemma}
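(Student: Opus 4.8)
The inequality $\D_n(2^\R\hookrightarrow (\MET)^\theta)\le \D_n((\MET)^\theta)$ is immediate, since every finite subset of the real line is in particular a finite metric space, so the supremum defining the left-hand side ranges over a subclass of the one defining the right-hand side. The content is therefore the reverse inequality, and my plan is to establish it in the pointwise form $c_{(\MET)^\theta}(X)\le \D_n\left(2^\R\hookrightarrow (\MET)^\theta\right)$ for every $n$-point metric space $(X,d)$; taking the supremum over such $X$ then yields the claim. Write $D=\D_n\left(2^\R\hookrightarrow (\MET)^\theta\right)$ and fix $\varepsilon>0$.

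First I would reuse the basepoint idea underlying Lemma~\ref{lem:reduce to line}, but replace the single $n^2$-point set of all pairwise distances by the $n$ individual ``distance-to-basepoint'' lines. For each $z\in X$ consider $L_z=\{d(x,z):x\in X\}\subseteq \R$, which has at most $n$ points, so $c_{(\MET)^\theta}(L_z)\le D$. Hence there is a metric on $L_z$ realizing distortion at most $D+\varepsilon$. Here is the step that exploits the snowflake structure: because $t\mapsto t^\theta$ is positively homogeneous, the scaling factor in this embedding can be absorbed into the metric, so I may choose a metric $\sigma_z$ on $L_z$ with $|s-t|\le \sigma_z(s,t)^\theta\le (D+\varepsilon)|s-t|$ for all $s,t\in L_z$, using the \emph{same} normalization convention for every basepoint $z$.

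Next I would pull each $\sigma_z$ back to $X$ along $x\mapsto d(x,z)$, obtaining semimetrics $\rho_z(x,y)=\sigma_z(d(x,z),d(y,z))$, and combine them into $\rho=\max_{z\in X}\rho_z$. A routine check shows $\rho$ is a genuine metric on $X$: the maximum of semimetrics is a semimetric, and taking $z=x$ gives $\rho(x,y)^\theta\ge \sigma_x(0,d(x,y))^\theta\ge d(x,y)>0$ when $x\ne y$. The decisive computation then rests on two facts: the elementary identity $d(x,y)=\max_{z}|d(x,z)-d(y,z)|$, valid in any metric space and attained at $z\in\{x,y\}$; and the commutation of the power with the maximum, $\rho(x,y)^\theta=(\max_z\rho_z(x,y))^\theta=\max_z\rho_z(x,y)^\theta$, which holds since $t\mapsto t^\theta$ is increasing. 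Combining these with the normalized bounds on each $\sigma_z$ gives $d(x,y)\le \rho(x,y)^\theta\le (D+\varepsilon)d(x,y)$, so the identity map $(X,d)\to (X,\rho^\theta)\in (\MET)^\theta$ has distortion at most $D+\varepsilon$. Letting $\varepsilon\to 0$ finishes the proof.

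The step I expect to be the true crux, and the reason the quadratic loss of Lemma~\ref{lem:reduce to line} disappears here, is the independent renormalization of the basepoint embeddings. For a general concave $\omega$ one cannot absorb a scaling factor into $\omega\circ\sigma_z$, which forces one to work with a single metric on the whole set $\{d(x,y)\}_{x,y}$ of size $n^2$; for the snowflake $\omega(t)=t^\theta$ the homogeneity lets each line be treated separately and then glued by the maximum, with the rest being the same ``$\max_z$ over basepoints'' argument already used in the general reduction.
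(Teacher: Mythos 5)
Your proof is correct and follows essentially the same route as the paper's: per-basepoint lines $L_z$, normalization of each scaling factor via the homogeneity of $t\mapsto t^\theta$ (your $\sigma_z$ is exactly the paper's $\rho_z/\lambda_z^{1/\theta}$), combination by $\max_z$, and the identity $d(x,y)=\max_z|d(x,z)-d(y,z)|$ together with the commutation of $t^\theta$ with the maximum. Your closing remark about why the homogeneity is what eliminates the quadratic loss of Lemma~\ref{lem:reduce to line} matches the paper's own discussion in Remark~\ref{rem:diff}.
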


\begin{proof}
 We need to show that $\D_{n}\left(2^\R\hookrightarrow (\MET)^\theta\right)\ge \D_n\left((\MET)^\theta\right)$ (the reverse inequality is trivial). Fix an $n$-point metric space $(X,d)$ and  $D> \D_{n}\left(2^\R\hookrightarrow (\MET)^\theta\right)$. For every $z\in X$ consider the subset of the real line given by $L_z=\{d(x,z)
 \}_{x\in X}$. Since $|L_z|\le n$, there exists a metric $\rho_z$ on $L_z$ and a scaling factor $\lambda_z>0$ such that for every $x,y\in X$ we have
$$
 \lambda_z|d(x,z)-d(y,z)|\le \rho_z(d(x,z),d(y,z))^\theta\le D\lambda_z|d(x,z)-d(y,z)|.
 $$
Define a semi-metric $\delta$ on $X$ by $$\delta(x,y)=\max_{z\in X} \frac{\rho_z(d(x,z),d(y,z))}{\lambda_z^{1/\theta}}.$$
Then for all $x,y\in X$,
$$
d(x,y)=\max_{z\in X}|d(x,z)-d(y,z)|\in \left[\frac{\delta(x,y)^\theta}{D},\delta(x,y)^\theta\right].
$$
This means that $(X,d)$ is bi-Lipschitz equivalent with distortion at most $D$ to the $\theta$-snowflake of $(X,\delta)$.
\end{proof}

We can now state and prove a concrete version of Proposition~\ref{prop:transform-dichotomy-tight}.
\begin{corollary} \label{cor:poly-growth}
For every $\theta\in (0,1)$ and every $n\in \N$ we have
$\D_n\left((\MET)^\theta\right)=(n-1)^{1-\theta}$.
\end{corollary}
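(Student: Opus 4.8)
The plan is to combine the reduction to line metrics furnished by Lemma~\ref{lem:reduce to line snowflake} with the two-sided estimate already packaged in Proposition~\ref{prop:line-embed}. By Lemma~\ref{lem:reduce to line snowflake} we have $\D_n((\MET)^\theta) = \D_n(2^\R \hookrightarrow (\MET)^\theta)$, so it suffices to establish that the supremum of $c_{(\MET)^\theta}(L)$ over all $n$-point subsets $L \subseteq \R$ equals $(n-1)^{1-\theta}$. I would prove the matching upper and lower bounds separately, the lower bound being witnessed by the path $P_{n-1}$, which is itself an $n$-point subset of the line.

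For the upper bound I would invoke Proposition~\ref{prop:line-embed} with snowflake exponent $1-\theta\in(0,1)$ in place of $\theta$. Its proof produces the explicit target $X=(\R,|x-y|^\theta)$, i.e.\ the $\theta$-snowflake of the real line, and shows that every finite $L\subseteq\R$ embeds into $X$ with distortion at most $(|L|-1)^{1-\theta}$. The key observation is that $X$ is legitimate for our purposes: the image of a finite $L$ lies in a finite subset of $X$, which is precisely the $\theta$-snowflake of a finite subset of $\R$ and hence a member of $(\MET)^\theta$. Since $|L|\le n$ this gives $c_{(\MET)^\theta}(L)\le (n-1)^{1-\theta}$, and therefore $\D_n((\MET)^\theta)\le (n-1)^{1-\theta}$.

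For the lower bound I would bound $c_{(\MET)^\theta}(P_{n-1})$ from below, where $P_{n-1}=\{0,1,\ldots,n-1\}$ has $n$ points (here $n\ge 2$). Let $f:P_{n-1}\to (Y,d_Y^\theta)$ be any embedding into a $\theta$-snowflake, and after rescaling assume there is $\lambda>0$ with $\lambda|i-j|\le d_Y(f(i),f(j))^\theta\le D\lambda|i-j|$ for all $i,j$, where $D=\dist(f)$. Comparing the two endpoints $0$ and $n-1$, the lower estimate gives $d_Y(f(0),f(n-1))\ge (\lambda(n-1))^{1/\theta}$, while the triangle inequality in the genuine metric $d_Y$ together with the per-edge upper estimate $d_Y(f(i),f(i+1))\le (D\lambda)^{1/\theta}$ gives $d_Y(f(0),f(n-1))\le (n-1)(D\lambda)^{1/\theta}$. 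Combining these and simplifying yields $D\ge (n-1)^{1-\theta}$. This is the same chaining computation as in the proof of Proposition~\ref{prop:line-embed}, the only difference being that I carry it out for an arbitrary target metric $d_Y$ rather than the real line; crucially, the argument uses only the triangle inequality in $(Y,d_Y)$, so it applies uniformly to every $\theta$-snowflake in $(\MET)^\theta$.

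I do not anticipate a genuine obstacle here: all the analytic content has been isolated in Lemma~\ref{lem:reduce to line snowflake} and Proposition~\ref{prop:line-embed}, and the remaining work is bookkeeping. The one point that requires a moment's care is the interface between the two bounds, namely verifying that the extremal $n$-point object for the lower bound (the path $P_{n-1}$) is simultaneously an admissible instance for the line reduction, so that the two estimates meet \emph{exactly} at $(n-1)^{1-\theta}$; this is immediate since $P_{n-1}\subseteq\R$. A secondary subtlety, also routine, is confirming that the infinite target $(\R,|x-y|^\theta)$ may be replaced by a finite member of $(\MET)^\theta$ without loss, which follows by restricting to the finite image.
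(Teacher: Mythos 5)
Your proof is correct and takes essentially the same route as the paper's: Lemma~\ref{lem:reduce to line snowflake} combined with the upper and lower estimates from the proof of Proposition~\ref{prop:line-embed} (with the exponent swap $\theta\leftrightarrow 1-\theta$), the upper bound coming from the embedding into the snowflaked line and the lower bound from the chaining computation applied to $P_{n-1}$. You are somewhat more explicit than the paper on two points it leaves implicit --- restricting the snowflaked real line to the finite image so that the target genuinely lies in $(\MET)^\theta$, and observing that the chaining argument uses only the triangle inequality in the underlying metric and hence rules out \emph{every} $\theta$-snowflake target, not just $\left(\R,|x-y|^\theta\right)$ --- but these are exactly the details the paper's citation of Proposition~\ref{prop:line-embed} is meant to cover.
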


\begin{proof}
We have seen in the proof of Proposition~\ref{prop:line-embed} that any $n$-point subset of the real line embeds into the $\theta$-snowflake of $\R$ with distortion at most $(n-1)^{1-\theta}$, and that this bound is attained for $P_n$. Now apply Lemma~\ref{lem:reduce to line snowflake}.
\end{proof}

\begin{remark}\label{rem:diff}
An inspection of the proof of Lemma~\ref{lem:reduce to line snowflake} shows that if $\omega:[0,\infty)\to [0,\infty)$ is increasing, concave, and $\omega(0)=0$, then the improvement~\eqref{eq:improvment} over~\eqref{eq:square} holds provided the class of metric spaces $\omega(\MET)$ is closed under dilation, i.e., if $d\in \omega(\MET)$ and $\lambda>0$ then also $\lambda d\in \omega(\MET)$. Equivalently, $\omega^{-1}(\lambda\omega(d))$ is a metric for every $\lambda>0$ and every metric $d$. This is the same as requiring that the function $t\mapsto \omega^{-1}(\lambda\omega(t))$ is subadditive on $[0,\infty)$. We do not know the possible moduli $\omega$ satisfying this requirement, but we believe that it is quite stringent, and possibly it characterizes snowflakes. Here we note that if $\omega$ is smooth on $(0,\infty)$ and $\lim_{t\to\infty} \omega(t)=\infty$ (in addition to being increasing, concave, and $\omega(0)=0$) and $t\mapsto \omega^{-1}(\lambda\omega(t))$ is concave for all $\lambda>0$, then there exist $a>0$ and $b\in (0,1]$ such that $\omega(t)=at^b$ for all $t\ge 0$. To see this, denote $f_\lambda (t)=\omega^{-1}(\lambda\omega(t))$. Then $f_\lambda'(t)=\lambda\omega'(t)/\omega'\left(\omega^{-1}(\lambda\omega(t))\right)$, and therefore the concavity of $f_\lambda$ is equivalent to the requirement
\begin{equation}\label{eq:second der}
\forall t>0,\quad 0\ge \frac{f_\lambda''(t)}{\lambda}=\frac{ \omega''(t)\omega'\left(\omega^{-1}(\lambda\omega(t))\right)-\frac{\lambda[\omega'(t)]^2\omega''\left(\omega^{-1}(\lambda\omega(t))\right)}{\omega'\left(\omega^{-1}(\lambda\omega(t))\right)}}
{\left[\omega'\left(\omega^{-1}(\lambda\omega(t))\right)\right]^2}.
\end{equation}
Assuming the validity of~\eqref{eq:second der}, we know that for all $\lambda,t>0$,
\begin{equation}\label{eq:secon step der}
\omega''(t)\omega'\left(\omega^{-1}(\lambda\omega(t))\right)\le \frac{\lambda[\omega'(t)]^2\omega''\left(\omega^{-1}(\lambda\omega(t))\right)}{\omega'\left(\omega^{-1}(\lambda\omega(t))\right)}.
\end{equation}
Denoting $s=\omega^{-1}(\lambda\omega(t))$, we see that since $w'>0$, inequality~\eqref{eq:secon step der} implies that for all $s,t>0$ we have,
\begin{equation*}\label{eq:force const}
\frac{\omega(s)\omega''(s)}{[\omega'(s)]^2}\ge \frac{\omega(t)\omega''(t)}{[\omega'(t)]^2}.
\end{equation*}

Thus there exists $c\in \R$ such that for all $t>0$ we have $\omega(t)\omega''(t)=c[\omega'(t)]^2$. Equivalently, $(\log \omega')'=c(\log \omega)'$. Thus for some $K\in \R$ we have $\log \omega'-c\log \omega=K$, or $\omega'/\omega^c=e^K$. If $c\neq 1$ then since $\omega(0)=0$, it follows that $c<1$ and $\omega(t)=(1-c)^{1/(1-c)}e^{K/(1-c)}t^{1/(1-c)}$. Since $\omega$ is concave, necessarily $1/(1-c)\in (0,1]$, as required. The case $c=1$ is ruled out since the equation $\omega'/\omega=e^K$ cannot be satisfied by a concave function.\qed
\end{remark}

\begin{remark}
The shortest path metrics on certain (in some cases any) constant degree expander graphs have been the only tool used so far to rule out intermediate behavior in the metric cotype dichotomy problem, i.e., to exhibit that for certain classes of metric spaces $\F$, the sequence  $\{\D_n(\F)\}_{n=1}^\infty$ cannot have asymptotic growth to infinity of $(\log n)^\alpha$ (up to constant factors) for some $\alpha\in (0,1)$. A simple consequence of Corollary~\ref{cor:poly-growth} is that expanders are not always the ``worst case" spaces for metric dichotomy problems. Indeed, if $G=(V,E)$ is an $n$-vertex constant degree expander, then $c_{(\MET)^\theta}(G)$ grows like $(\log n)^{1-\theta}$ rather than the required $n^{1-\theta}$. More generally (since constant degree expanders have logarithmic diameter), if $G=(V,E)$ is an $n$-vertex unweighted graph and $d_G$ is its shortest path metric, then
$c_{(\MET)^\theta}(V,d_G)= \Delta^{1-\theta}$, where $\Delta$ is the diameter of $G$. This is true since $(V,d_G)$ contains $P_{\Delta+1}$ isometrically, and therefore by Proposition~\ref{prop:line-embed} we have $c_{(\MET)^\theta}(V,d_G)\ge \Delta^{1-\theta}$. In the reverse direction, the identity mapping
 $(V,d_G) \mapsto (V,d_G^\theta)$ has distortion at most $\Delta^{1-\theta}$.
\end{remark}

\bibliographystyle{abbrv}
\bibliography{dich}
\end{document}